\newcommand{\beqa}{\begin{eqnarray*}}
\newcommand{\eeqa}{\end{eqnarray*}}
\newcommand{\beqn}{\begin{eqnarray}}
\newcommand{\eeqn}{\end{eqnarray}}
\newcommand{\iy}{\infty}
\newcommand{\R}{\mathbb R}
\newcommand{\N}{\mathbb N}
\newcommand{\mcH}{\mathcal H}
\newcommand{\mcB}{\mathcal B}
\newcommand{\al}{\alpha}
\newcommand{\e}{\varepsilon}
\newcommand{\de}{\delta}
\newcounter{cnt1}
\newcounter{cnt2}
\newcounter{cnt3}
\newcommand{\blr}{\begin{list}{$($\roman{cnt1}$)$}
 {\usecounter{cnt1} \setlength{\topsep}{0pt}
 \setlength{\itemsep}{0pt}}}
\newcommand{\bla}{\begin{list}{$($\alph{cnt2}$)$}
 {\usecounter{cnt2} \setlength{\topsep}{0pt}
 \setlength{\itemsep}{0pt}}}
\newcommand{\bln}{\begin{list}{$($\arabic{cnt3}$)$}
 {\usecounter{cnt3} \setlength{\topsep}{0pt}
 \setlength{\itemsep}{0pt}}}
\newcommand{\el}{\end{list}}
\newtheorem{thm}{Theorem}[section]
\newtheorem{ex}[thm]{Example}
\newtheorem{Def}[thm]{Definition}
\newtheorem{rem}[thm]{Remark}
\newcommand{\Rem}{\begin{rem} \rm}
\newcommand{\bdfn}{\begin{Def} \rm}
\newcommand{\edfn}{\end{Def}}
\newcommand{\ba}{\begin{array}}
\newcommand{\ea}{\end{array}}
\numberwithin{equation}{section}
\date{}
\begin{document}
\title{\bf{The M-basis Problem for Separable Banach Spaces}}
\author[Gill]{T. L. Gill}
\address[Tepper L. Gill]{ Departments of Mathematics, Physics, and Electrical \& Computer Engineering, Howard University\\
Washington DC 20059 \\ USA, {\it E-mail~:} {\tt tgill@howard.edu}}
\date{}
\subjclass{Primary (46B03), (47D03) Secondary(47H06), (47F05) (35Q80)}
\keywords{spectral theorem, vector measures,  vector-valued functions, Reflexive Banach spaces}
\maketitle
\begin{abstract}    In this note we show that, if $\mcB$ is separable Banach space, then there is a biorthogonal system $\{x_n, x_n^*\}$ such that, the closed linear span of $\{x_n\},\overline{\left\langle {\{x_n\} }\right\rangle}=\mcB$ and $\left\| {x_n } \right\|\left\| {x_n^* } \right\| = 1$ for all  $n$.  
\end{abstract}
\section{Introduction}
In 1943, Marcinkiewicz \cite{M} showed that every separable Banach space  $\mcB$ has a biorthogonal system  $\{x_n, x_n^*\}$  and $\overline{\left\langle {\{x_n\} }\right\rangle}=\mcB$.  This biorthogonal system is now known as a  Marcinkiewicz basis for $\mcB$. 
A well-known open problem is whether one can choose the system $\{x_n, x_n^*\}$ such that $\left\| {x_n } \right\|\left\| {x_n^* } \right\| = 1$ (see Diestel \cite{D}). This  is called the M-basis problem for separable Banach spaces.  The problem has been studied by Singer \cite{SI}, Davis and Johnson \cite{DJ}, Ovsepian and Pelczy\'{n}iski \cite{OP},  Pelczy\'{n}iski \cite{PE} and Plichko \cite{PL}.  The work of Ovsepian and Pelczy\'{n}iski \cite{OP} led to the construction of a bounded M-basis, while  that of Pelczy\'{n}iski \cite{PE} and Plichko \cite{PL} led to independent proofs that, for every  $\e>0$, it is possible to find a biorthogonal system with the property that $\left\| {x_n } \right\|\left\| {x_n^* } \right\| < 1+\e$.   The purpose of this note is to show that we can find a biorthogonal system with the property that $\left\| {x_n } \right\|\left\| {x_n^* } \right\| =1$.\subsection{Preliminaries}
\begin{Def}Let $\mcB$ be  an infinite-dimensional separable Banach space.  The family $ \{x_i \}_{i=1}^{\infty} \subset \mcB$ is called:
\begin{enumerate}
\item A \emph{fundamental} system if $\overline{\left\langle { \{x_j : j \in \mathbb{N} \} }\right\rangle}=\mcB$.
\item A \emph{minimal} system if $x_i \notin \overline{\left\langle { \{x_j : j \in \mathbb{N} \setminus\{i\} \} }\right\rangle}$.
\item A \emph{total} if for each $x\ne 0$ there exists $i\in \mathbb{N}$ such that $x_i^{*}(x)\ne 0$.
\item A \emph{biorthogonal} system if $x_i^*(x_j)=\de_{ij}$, for all $i,j\in \mathbb{N}$.
\item A \emph{Markushevich basis} (or M-basis) if it is a fundamental minimal and total biorthogonal sequence. 
\end{enumerate}
\end{Def}
To understand the M-basis problem and its solution in a well-known setting, let $\R^2$ have its standard inner product $(\,  \cdot, \cdot \,)$ and let $x_1, \;x_2$ be any two independent basis vectors,.     Define a new inner product on $\R^2$ by 
\beqn
\begin{gathered}
  \left\langle {y}
 \mathrel{\left | {\vphantom {y z}}
 \right. \kern-\nulldelimiterspace}
 {z} \right\rangle  = {t_1}\left( {x_1^{} \otimes x_1^{}} \right)\left( {y \otimes z} \right) + {t_2}\left( {x_2^{} \otimes x_2^{}} \right)\left( {y \otimes z} \right) \hfill \\
  \quad \quad  = {t_1}\left( {y,x_1^{}} \right)\left( {z,x_1^{}} \right) + {t_2}\left( {y,x_2^{}} \right) \left( {z,x_2^{}} \right), \hfill \\ 
\end{gathered} 
\eeqn
where $t_1, \, t_2>0, \; t_1+t_2=1$.  Define new functionals $S_1$ and $S_2$ by:

\[
{S_1}(x) = \frac{{\left\langle {x}
 \mathrel{\left | {\vphantom {x {{x_1}}}}
 \right. \kern-\nulldelimiterspace}
 {{{x_1}}} \right\rangle }}{{{\alpha _1}\left\langle {{{x_1}}}
 \mathrel{\left | {\vphantom {{{x_1}} {{x_1}}}}
 \right. \kern-\nulldelimiterspace}
 {{{x_1}}} \right\rangle }},\quad {S_2}(x) = \frac{{\left\langle {x}
 \mathrel{\left | {\vphantom {x {{x_2}}}}
 \right. \kern-\nulldelimiterspace}
 {{{x_2}}} \right\rangle }}{{{\alpha _2}\left\langle {{{x_2}}}
 \mathrel{\left | {\vphantom {{{x_2}} {{x_2}}}}
 \right. \kern-\nulldelimiterspace}
 {{{x_2}}} \right\rangle }},\quad {\text{for}}\quad y \in {\mathbb{R}^2}.
\]
Where $\al_1, \; \al_2 >0$ are chosen to ensure that $\left\| {{S_1}} \right\| = \left\| {{S_2}} \right\| = 1$.
Note that, if $(x_1, x_2)=0$, $S_1$ and $S_2$ reduce to 
\[
{S_1}(x) = \frac{{\left( {x,x_1^{}} \right)}}{{{\alpha _1}\left\| {{x_1}} \right\|}},\quad {S_2}(x) = \frac{{\left( {x,x_2^{}} \right)}}{{{\alpha _2}\left\| {{x_2}} \right\|}}.
\]
Thus, we can define many equivalent inner products on $\R^2$ and many linear functionals with the same properties but different norms. 

The following example shows how this construction can be of use. 
\begin{ex}
In this example, let $x_1=e_1$ and $x_2=e_1+e_2$, where $e_1=(1,0), \; e_2=(0,1)$. In this case, the biorthogonal functionals are generated by the vectors $\bar{x}_1=e_1-e_2$ and $\bar{x}_2=e_2$ {\rm{(}{i.e.}, $x_1^*(x) = \left( {x,{{\bar x}_1}} \right),\quad x_2^*(x) = \left( {x,{{\bar x}_2}} \right)$\rm{)}}.  It follows that $(x_1,\bar{x}_2)=0, \; (x_1,\bar{x}_1)=1$ and $(x_2, \bar{x}_1)=0, \; (x_2, \bar{x}_2)=1$.  However,  $\left\| {x_1 } \right\|\left\| {\bar{x}_1 } \right\| = \sqrt 2 ,\quad \left\| {x_2 } \right\|\left\| {\bar{x}_2 } \right\| = \sqrt 2$, so that 
$\left\{ {{x_1},\left( {\; \cdot ,\;{{\bar x}_1}} \right)} \right\}$ and $\left\{ {{x_2},\left( {\; \cdot ,\;{{\bar x}_2}} \right)} \right\}$ fails to solve the M-basis problem on $\R^2$.
 
In this case, we  set $\al_1=1$ and $\al_2= \left\| {{{x}_2}} \right\|$ so that, without changing $x_1$ and $x_2$, and using the inner product from equation (1.1) in the form 
\[
  \left\langle {x}
 \mathrel{\left | {\vphantom {x y}}
 \right. \kern-\nulldelimiterspace}
 {y} \right\rangle  = {t_1}\left( {x,\bar{x}_1^{}} \right)\left( {y,\bar{x}_1^{}} \right) + {t_2}\left( {x,\bar{x}_2^{}} \right) \left( {y,\bar{x}_2^{}} \right), 
\]
$S_1$ and $S_2$ become
\[
{S_1}(x) = \frac{{\left( {x,\bar{x}_1^{}} \right)}}{{\left\| {{\bar{x}_1}} \right\|}},\quad {S_2}(x) = \frac{{\left( {x,\bar{x}_2^{}} \right)}}{{\left\| {{{x}_2}} \right\|}}.
\]
It now follows that $S_i(x_i)=1$ and $S_i(x_j)=0$ for $i \ne j$ and    $\left\| {S_i^{} } \right\|\left\| {x_i^{} } \right\| = 1$,  so that system $\{ {x_1, S_1 } \}$ and $\{ {x_2, S_2 } \}$  solves the M-basis problem.
\end{ex}
\begin{rem} For a given set of independent vectors on a finite dimensional vector space, It is known that the corresponding biorthogonal functionals are unique.  This example shows that uniqueness is only up to a scale factor and this is what we need to produce an M-basis. 
\end{rem}
The following theorem is one of the main ingredients in our solution to the general M-basis problem.  (It is a variation of a result due to Kuelbs \cite{KB}.)
\begin{thm} Suppose ${\mathcal{B}}$ is a separable  Banach space, then there exist a separable Hilbert space ${\mathcal{H}}$  such that, ${\mathcal{B}} \subset {\mathcal{H}}$ as a continuous dense embedding.
\end{thm}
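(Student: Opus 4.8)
The plan is to realize $\mcH$ as the completion of $\mcB$ under a Hilbertian norm that is dominated by the original Banach norm, so that the identity map furnishes the desired continuous dense embedding. Since $\mcB$ is separable, the first step is to produce a countable family $\{f_n\}_{n=1}^\iy \ci \mcB^*$ that is \emph{total}, i.e.\ that separates the points of $\mcB$. Fixing a countable dense set $\{y_n\}_{n=1}^\iy$ in $\mcB$ and invoking the Hahn--Banach theorem, I would choose $f_n \in \mcB^*$ with $\|f_n\| = 1$ and $f_n(y_n) = \|y_n\|$; a routine approximation argument (if $x\ne 0$, pick $y_n$ close to $x$ and estimate $|f_n(x)|$ from below) then shows $\sup_n |f_n(x)| > 0$ whenever $x \ne 0$, so the family is total.

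Next I would fix positive weights $t_n$ with $\sum_{n=1}^\iy t_n < \iy$ and define
\beqa
\langle x, y \rangle = \sum_{n=1}^\iy t_n\, f_n(x)\, \ov{f_n(y)}, \qquad x, y \in \mcB.
\eeqa
Because $|f_n(x)| \le \|x\|$, the series converges absolutely and defines a positive semidefinite Hermitian (sesquilinear) form; totality of $\{f_n\}$ forces $\langle x, x\rangle = 0 \Ra x = 0$, so this is a genuine inner product on $\mcB$, not merely a semi-inner product. Writing $\|x\|_{\mcH} = \langle x, x\rangle^{1/2}$, the estimate
\beqa
\|x\|_{\mcH}^2 = \sum_{n=1}^\iy t_n |f_n(x)|^2 \le \Big( \sum_{n=1}^\iy t_n \Big) \|x\|^2
\eeqa
shows that $\|\cdot\|_{\mcH}$ is dominated by $\|\cdot\|$.

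Finally I would let $\mcH$ be the completion of $(\mcB, \|\cdot\|_{\mcH})$, which is a Hilbert space containing $\mcB$ as a dense subspace by the very definition of completion; the domination estimate makes the inclusion $\mcB \hookrightarrow \mcH$ continuous (and injective, since $\|\cdot\|_{\mcH}$ is a norm), while the countable set $\{y_n\}$, being dense for $\|\cdot\|$, remains dense for the weaker norm $\|\cdot\|_{\mcH}$ and hence in $\mcH$, so $\mcH$ is separable. I expect the only genuinely delicate point to be the verification that $\{f_n\}$ is total, equivalently the strict positivity that upgrades $\langle \cdot, \cdot\rangle$ from a semi-inner product to an inner product; everything else is convergence bookkeeping controlled by the summability of the weights $t_n$.
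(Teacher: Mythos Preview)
Your proof is correct and follows essentially the same construction as the paper: choose norming functionals $f_n$ associated to a countable dense set, form the weighted inner product $\sum t_n f_n(x)\overline{f_n(y)}$, and complete. If anything, your version is slightly more careful than the paper's, since you explicitly verify totality of $\{f_n\}$ (hence genuine positive definiteness) and the separability of $\mcH$, points the paper glosses over.
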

\begin{proof} Let $\{ u_n \} $ be a countable dense sequence in ${\mathcal{B}}$ and let $\{ u_n^* \} $ be any fixed set of corresponding duality mappings (i.e., $u_n^*  \in {\mathcal{B}^*}$, the dual space of ${\mathcal{B}}$ and $
u_n^* (u_n ) = \left\langle {u_n ,u_n^* } \right\rangle  = \left\| {u_n } \right\|_{\mathcal{B}}^2 = \left\| {u_n^* } \right\|_{\mathcal{B}^*}^2 =1$).   Let $\{ t_n \}$ be a positive sequence of numbers such that $\sum\nolimits_{n = 1}^\infty  {t_n }  = 1$, and define an inner product on $\mcB$, $\left( {u,v} \right)$, by:
\[
\left( {u,v} \right) = \sum\nolimits_{n = 1}^\infty  {t_n u_n^* (u)} \bar{u}_n^* (v).
\]
It is easy to see that $\left( {u,v} \right)$ is an inner product on ${\mathcal{B}}$.  Let $
{\mathcal{H}}$ be the completion of ${\mathcal{B}}$ with respect to this inner product.  It is clear that ${\mathcal{B}}$ is dense in ${\mathcal{H}}$, and 
\[
\left\| u \right\|_\mcH^2  = \sum\nolimits_{n = 1}^\infty  {t_n \left| {u_n^* (u)} \right|^2 }  \le \sup _n \left| {u_n^* (u)} \right|^2  \le \left\| u \right\|_{\mathcal{B}}^2,
\]
so the embedding is continuous.
\end{proof}
\subsection{The Main Result}
The following theorem shows how our solution to the M-basis problem for $\R^2$ can be extended to any separable Banach space.

\begin{thm} Let $\mcB$ be a infinite-dimensional separable Banach space.  Then $\mcB$ contains  an  M-basis with the property that $\left\| {x_i } \right\|_{\mcB} \left\| {x_i^* } \right\|_{{\mcB}^*} = 1$ for all $i$. 
\end{thm}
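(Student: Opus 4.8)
The plan is to split the required identity into its trivial and nontrivial halves, and to manufacture the nontrivial half from the Hilbert embedding of Theorem 1.4. For any biorthogonal system one has $\|x_i\|_{\mcB}\,\|x_i^*\|_{{\mcB}^*}\ge |x_i^*(x_i)|=1$, so the whole content of the theorem is the reverse inequality: I must produce, for each $i$, a biorthogonal functional that \emph{norms} its vector. After scaling each pair reciprocally so that $\|x_i\|_{\mcB}=1$ (which leaves the product unchanged, as the Remark emphasizes), the goal becomes: find biorthogonal $x_i^*$ with $\|x_i^*\|_{{\mcB}^*}=1$, i.e.\ a Hahn--Banach support functional at $x_i$ that simultaneously annihilates every $x_j$ with $j\neq i$.

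Next I would apply Theorem 1.4 to embed $\mcB$ densely and continuously in a separable Hilbert space $\mcH$, with $\|u\|_{\mcH}\le\|u\|_{\mcB}$; dualizing yields the Gelfand triple $\mcB\ci\mcH\cong\mcH^{*}\ci{\mcB}^{*}$. Starting from the dense sequence $\{u_n\}$ of the proof of Theorem 1.4, whose terms satisfy $\|u_n\|_{\mcB}=\|u_n^{*}\|_{{\mcB}^*}=u_n^{*}(u_n)=1$, I would Gram--Schmidt in $\mcH$ to obtain vectors $e_n\in\mcB$ that are orthonormal in $\mcH$, together with the coordinate functionals $f_n=(\,\cdot\,,e_n)_{\mcH}\in\mcH^{*}\ci{\mcB}^{*}$. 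These satisfy $f_n(e_m)=\de_{nm}$, and $\{e_n,f_n\}$ is an M-basis: finite linear combinations of the $e_n$ are dense in $\mcH$, hence (using the continuous dense inclusion and density of $\{u_n\}$ in $\mcB$) dense in $\mcB$, which gives fundamentality, and minimality plus totality follow from biorthogonality together with the injectivity of the inclusion $\mcH^{*}\ci{\mcB}^{*}$.

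I would then carry out the normalization modelled on the $\R^{2}$ example: set $x_n=e_n/\|e_n\|_{\mcB}$ and $x_n^{*}=\|e_n\|_{\mcB}\,f_n$, which preserves biorthogonality and the M-basis property and makes $\|x_n\|_{\mcB}=1$. It remains to show $\|x_n^{*}\|_{{\mcB}^*}=1$, i.e.\ $\|f_n\|_{{\mcB}^*}=1/\|e_n\|_{\mcB}$. Here the two bounds come for free from $\|u\|_{\mcH}\le\|u\|_{\mcB}$: since $\|e_n\|_{\mcH}=1$ one has $\|f_n\|_{{\mcB}^*}\le\|e_n\|_{\mcH}=1$, while evaluating $f_n$ at $e_n/\|e_n\|_{\mcB}$ gives $\|f_n\|_{{\mcB}^*}\ge 1/\|e_n\|_{\mcB}$, so the product $\|x_n\|_{\mcB}\,\|x_n^{*}\|_{{\mcB}^*}$ lies in $[1,\|e_n\|_{\mcB}]$.

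The hard part, and the step I expect to be the genuine obstacle, is forcing the \emph{lower} bound to be attained, i.e.\ arranging that the Hilbert functional $(\,\cdot\,,e_n)_{\mcH}$ attains its ${\mcB}^{*}$-norm exactly in the direction $e_n$. This is precisely where the freedom in the construction must be deployed: the weights $t_n$ and the scale factors $\al_i$ of the $\R^{2}$ model, together with the choice of duality maps defining the inner product, should be tuned so that on each orthogonalized vector the induced Hilbert functional coincides, up to the scalar $\|e_n\|_{\mcB}$, with a support functional of $e_n$ for the $\mcB$-norm. Concretely I would attempt to propagate the norming property of the individual duality pairs $\{u_n,u_n^{*}\}$ (which already satisfy $\|u_n\|_{\mcB}\|u_n^{*}\|_{{\mcB}^*}=1$) through the Gram--Schmidt procedure; controlling how orthogonalization interacts with the Banach-space support functionals is the crux, and everything preceding it is routine.
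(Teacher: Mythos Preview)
Your proposal has a genuine gap at exactly the point you yourself flag as the obstacle. You need $\|f_n\|_{\mcB^*}=1/\|e_n\|_\mcB$, but you establish only $1/\|e_n\|_\mcB\le\|f_n\|_{\mcB^*}\le 1$, and the suggestion to tune the weights $t_n$, the duality maps, or the scale factors so that the Hilbert functional $(\,\cdot\,,e_n)_\mcH$ becomes a $\mcB$-support functional at $e_n$ is a hope, not a mechanism. Gram--Schmidt in $\mcH$ has no reason to respect $\mcB$-norming geometry: already at the second step $e_2$ is the $\mcH$-orthogonal correction of $u_2$ against $u_1$, and nothing ties the direction that $\mcB$-norms $e_2$ to the Hilbert functional $(\,\cdot\,,e_2)_\mcH$. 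Propagating the norming property of the pairs $\{u_n,u_n^*\}$ through orthogonalization is at least as hard as the original problem, so the plan as written does not close.

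The paper takes a different route that sidesteps your difficulty. It begins with an arbitrary fundamental minimal system $\{x_i\}$ in $\mcB$ (no Gram--Schmidt) and, for each $i$, defines $\hat x_i^*$ only on the one-dimensional span of $x_i$ by $\hat x_i^*(y)=\tfrac{\|x_i\|_\mcB^2}{\|x_i\|_\mcH^2}(y,x_i)_\mcH$, so that $\hat x_i^*(x_i)=\|x_i\|_\mcB^2$. It then invokes Hahn--Banach with the dominating function $p_i(y)=\|x_i\|_\mcB\|y\|_\mcB$ to extend to $x_i^*\in\mcB^*$ with $\|x_i^*\|_{\mcB^*}\le\|x_i\|_\mcB$; combined with $x_i^*(x_i)=\|x_i\|_\mcB^2$ this forces $\|x_i\|_\mcB\|x_i^*\|_{\mcB^*}=1$ directly. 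Thus the paper never asks the global Hilbert functional to have the correct $\mcB^*$-norm; the Hilbert data only pins down the value on the line through $x_i$, while Hahn--Banach supplies the norm control. The Hilbert embedding enters instead through the orthogonality claim $x_i\in M_{i,\mcH}^\perp$, which the paper uses to argue biorthogonality and totality of the extended system. When you read the paper, that is the step to scrutinize: a Hahn--Banach extension off a one-dimensional subspace is not unique, and its values on the remaining $x_j$ are not determined a priori by $\hat x_i^*$.
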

\begin{proof}
Construct $\mcH$ via Theorem 1.3, so that  $\mcB \subset \mcH$ is a dense continuous embedding and let ${\left\{ {x_i } \right\}_{i=1}^\iy}$ be a fundamental minimal system for $\mcB$.   If $i \in \N$, let $M_{i, \mcH}$ be the closure of the span of ${\left\{ {x_j } \right\}}$ in $\mcH$, where $j \ne i$  (i.e., $M_{i, \mcH} = \overline {\left\langle {\left\{ {x_j } \right\}\left| {j \in } \right.\mathbb{N},j \ne i} \right\rangle }$).  Thus, $x_i \in M_{i, \mcH}^\bot$, $M_{i,\mcH}  \oplus M_{i,\mcH}^ \bot   = \mcH$ and $\left( {y,x_i } \right)_\mcH = 0$ for all $y \in M_{i, \mcH}$.  

Let $M_{i}$ be the closure of the span of ${\left\{ {x_j \; j \ne i} \right\}}$ in $\mcB$.  Since $M_{i} \subset M_{i, \mcH}$ and $x_i \notin M_i,  \; (y, x_i)_\mcH=0$ for all $y \in M_i$.  Let the seminorm $p_i(\, \cdot \,)$ be defined on the closure of the span of $\{x_i \}, \; \overline {\left\langle {\left\{ {x_i } \right\}} \right\rangle }$ by $p_i(y) = \left\| {x_i } \right\|_\mcB \left\| y \right\|_\mcB$,
and define $ {\hat x}_i^* (\, \cdot \,)$ by:
\[
 {\hat x}_i^* (y) = \frac{{\left\| x_i \right\|_{\mcB}^2 }}
{{\left\| x_i \right\|_{\mcH}^2 }}\left( {y, x_i} \right)_{\mcH}. 
\]
By the Hahn-Banach Theorem, ${\hat x}_i^* (\, \cdot \,)$ has an extension $x_i^* (\, \cdot \,)$ to $\mcB$, such that $\left| {x_i^* (y)} \right|   \leqslant p_i (y)= \left\| x_i \right\|_B \left\| y \right\|_B$ for all $y \in \mcB$. By definition of $p_i(\, \cdot \,)$, we see that $\left\| {x_i^* } \right\|_{\mathcal{B}^*} \le \left\| x_i \right\|_\mathcal{B}$.   On the other hand  $x_i^* (x_i) = \left\| x_i \right\|_\mathcal{B}^2 \leqslant \left\| x_i \right\|_\mathcal{B} \left\| {x_i^* } \right\|_{\mathcal{B}^*}$, 
so that $x_i^* ( \, \cdot \, )$ is a duality mapping for $x_i$.  If $x_i^*(x)=0$ for all $i$, then $x \in \bigcap_{i=1}^\iy{M_i} =\{0\}$ so that the family ${\left\{ {x_i^* } \right\}_{i=1}^\iy}$ is total.  If  we let $\left\| x_i \right\|_\mathcal{B}=1$, it is clear that $x_i^*(x_j)= \de_{ij}$, for all $i,j \in \N$. Thus, $\{x_i, x_i^* \}$ is an  M-basis system with $\left\| {x_i } \right\|_{\mcB} \left\| {x_i^* } \right\|_{{\mcB}^*} = 1$ for all $i$.
\end{proof}

\end{document}